\let\sse=\subseteq
\let\noi=\noindent
\let\limply=\Longrightarrow
\let\void=\varnothing
\font\srm=cmr6 
\def\cd{{\hbox{\srm{\#}}}}
\def\0{\{0\}}
\def\span{{{\rm span}}}
\def\smallfrac#1#2{{\textstyle{\frac{#1}{#2}}}}
\def\conv{{\;\longrightarrow\;}}
\def\sconv{{{\kern-1pt\buildrel_{\scriptstyle s}\over\conv}}}
\def\uconv{{{\kern-1pt\buildrel_{\scriptstyle u}\over\conv}}}
\def\B{{\mathcal B}}
\def\H{{\mathcal H}}
\def\M{{\mathcal M}}
\def\N{{\mathcal N}}
\def\R{{\mathcal R}}
\def\BH{{\B[\H]}}
\def\CC{{\mathbb C\kern.5pt}}
	\def\DD{{\mathbb D}}
\def\TT{{\mathbb T}}
\newtheorem{theorem}{Theorem}[section]
\newtheorem{corollary}{Corollary}[section]
\newtheorem{proposition}{Proposition}[section]
\theoremstyle{definition}
\begin{document}

\vglue-50pt\noi
\hfill{\it Functional Analysis, Approximation and Computation}\/,
{\bf 10}--3 (2018) 21--26

\vglue20pt
\title{Residual Spectrum of Power Bounded Operators}
\author{A. Mello}
\address{\vskip-12pt Rural Federal University of Rio de Janeiro,
         Nova Igua\c cu, RJ, Brazil}
\email{aleksandrodemello@yahoo.com.br}
\author{C.S. Kubrusly}
\address{\vskip-12pt Applied Mathematics Department, Federal University,
         Rio de Janeiro, RJ, Brazil}
\email{carloskubrusly@gmail.com}
\subjclass{Primary 47A10; Secondary 47A35}
\renewcommand{\keywordsname}{Keywords}
\keywords{Hilbert space, power bounded operators, residual spectrum}
\date{July 2, 2018}

\begin{abstract}
The residual spectrum of a power bounded operator lies in the open unit
disk.
\end{abstract}

\maketitle

\vskip-10pt\noi
\section{Introduction}
A well-known open question on Hilbert-space operators asks whether a power
bounded operator has the residual spectrum included in the unit circle$.$
The purpose of this paper is to offer an answer to this so far open
question$.$ The answer follows from known results and is obtained by
elementary arguments.

\vskip6pt
It has been known for a long time that the above question has an
affirmative answer if power bounded is restricted to contractions$.$ All
proofs for the contraction case are elementary$.$ We first give a new but
still elementary proof for the contraction case, which shows that attempts
to extend the contraction case towards the power bounded case might lead
to a false start$.$ The power bounded case requires a different (although
still elementary and based on a well-known result) start.

\vskip6pt
A possible new start that leads to a proof for the power bounded case is
the Ergodic Theorem for power bounded operators$.$ After this, there are
certainly a few different (but similar) paths to proceed$.$ We have
chosen what we consider to be an elementary path in the proof of
Theorem 4.1$.$ Applications are explored in Sections 5 and 6.

\section{Notation}

Throughout the paper $\H$ will stand for an infinite-dimensional, complex
(not necessarily separable) Hilbert space$.$ The inner product in $\H$
will be denoted by ${\<\cdot;\cdot\>}.$ By an operator on $\H$ we mean a
bounded linear transformation of $\H$ into itself$.$ Let $\BH$ stand for
the $C^*$-algebra of all operators on $\H.$ Both norms in $\H$ or in $\BH$
will be denoted by the same symbol ${\|\cdot\|}.$ An operator ${T\in\BH}$
is an isometry if ${\|Tx\|=\|x\|}$ for every ${x\in\H}$, it is unitary if
it is an invertible isometry, a contraction if ${\|Tx\|\le\|x\|}$ for every
${x\in\H}$ (i.e., ${\|T\|\le1}$), and power bounded if
$$
\sup_n\|T^n\|<\infty.
$$
Equivalently (by the Banach--Steinhaus Theorem), if
${\sup_n\|T^nx\|<\infty}$ for every ${x\in\H}$ (otherwise it is called
power unbounded)$.$ It is clear that every isometry is a contraction, and
every contraction is power bounded$.$ A completely nonunitary contraction
is an operator on $\H$ for which its restriction to any reducing subspace
is not unitary$.$ For any operator ${T\in\BH}$, let $\N(T)=T^{-1}(\0)$
denote its kernel and let $I$ stand for the identity in $\BH.$ The set
$\sigma_{\kern-1ptP}(T)
=\big\{{\lambda\in\CC}\!:{\N(\lambda I-T)\ne\0}\big\}$
is the point spectrum of $T$ (i.e., the set of all eigenvalues of $T).$
Let ${T^*\!\in\BH}$ stand for the adjoint of ${T\in\BH}.$ The residual
spectrum of $T$ is the set \cite[p.5]{MDOT}
$$
\sigma_{\kern-1ptR}(T)=\sigma_{\kern-1ptP}(T^*)^*\\\sigma_{\kern-1ptP}(T).
$$
Here $\Lambda^{\!*}=\{{\overline\lambda\in\CC}\!:{\lambda\in\Lambda}\}$ is
the set of all complex conjugates of points in a set ${\Lambda\sse\CC}.$
Let $\DD$ be the open unit disk (centered at the origin of the complex
plane), and let ${\TT=\partial\DD}$ be the unit circle, where
$\DD^-=\DD\cup\TT$ is the closed unit disk.

\section{Preliminaries}

Consider the above set-up$.$ Take an arbitrary operator ${T\in\BH}$.

\begin{proposition}
If\/ $T$ is a contraction, then\/ ${\sigma_{\kern-1ptR}(T)\sse\DD}$.
\end{proposition}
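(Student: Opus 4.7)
The plan is to show the contrapositive on the unit circle: if $\lambda$ with $|\lambda|=1$ satisfies $\bar\lambda\in\sigma_P(T^*)$, then $\lambda\in\sigma_P(T)$, so such a $\lambda$ cannot lie in the residual spectrum by the definition $\sigma_R(T)=\sigma_P(T^*)^*\setminus\sigma_P(T)$. Combined with the standard fact that the spectrum of a contraction lies in $\DD^-$, this will give $\sigma_R(T)\sse\DD$.

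The key step is a short computation. Suppose ${T^*x=\bar\lambda x}$ with ${x\ne0}$ and ${|\lambda|=1}$. I would expand
\[
\|Tx-\lambda x\|^2=\|Tx\|^2-2\,\mathrm{Re}\,\<Tx,\lambda x\>+|\lambda|^2\|x\|^2
\]
and use the adjoint relation to rewrite $\<Tx,\lambda x\>=\bar\lambda\<x,T^*x\>=|\lambda|^2\|x\|^2$. Substituting and using $|\lambda|=1$ collapses the right-hand side to $\|Tx\|^2-\|x\|^2$, which is ${\le0}$ because $T$ is a contraction. Hence $Tx=\lambda x$, and consequently $\lambda\in\sigma_P(T)$.

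With this in hand, the proof concludes by combining two facts. First, every eigenvalue of a contraction lies in $\DD^-$, so ${\sigma_R(T)\sse\DD^-}$ as well. Second, the step above shows that no point of $\TT$ can belong to $\sigma_P(T^*)^*\setminus\sigma_P(T)$, so ${\sigma_R(T)\cap\TT=\void}$. Together these inclusions give ${\sigma_R(T)\sse\DD}$.

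There is no real obstacle here; the only subtle point is remembering that in the identity above one must write $\<Tx,\lambda x\>$ as $\bar\lambda\<Tx,x\>$ and then move $T$ across the inner product to activate the hypothesis $T^*x=\bar\lambda x$. This same identity will fail to be useful when $T$ is merely power bounded, which is why the authors announce in the introduction that a different opening (the ergodic theorem) will be needed for the general power-bounded case.
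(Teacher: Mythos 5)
Your proof is correct, but it takes a genuinely different route from the paper. The paper argues via the Nagy--Foia\c s--Langer decomposition $T=C\oplus U$ into a completely nonunitary contraction and a unitary, then invokes the known facts that $\sigma_P(C)\cup\sigma_R(C)\sse\DD$ for completely nonunitary contractions and that the residual spectrum of the normal summand $U$ is empty, piecing the parts together through the direct-sum behaviour of point spectra. You instead give the classical direct computation: if $|\lambda|=1$ and $T^*x=\overline\lambda x$ with $x\ne0$, then $\|Tx-\lambda x\|^2=\|Tx\|^2-\|x\|^2\le0$, so $Tx=\lambda x$, whence $\TT\cap\big(\sigma_P(T^*)^*\setminus\sigma_P(T)\big)=\void$; combined with $\sigma_R(T)\sse\DD^-$ (from $r(T)\le\|T\|\le1$, or from the eigenvalues of the contraction $T^*$ lying in $\DD^-$) this gives the claim. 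Your argument is essentially the ``still more elementary'' proof the authors point to right after their Proposition 3.1 (Proposition 8.5 of the cited monograph); it is shorter and self-contained, needing no structure theory. What the paper's decomposition proof buys is rhetorical: the authors deliberately chose it to illustrate that this structural route is a false start for the power bounded case, and your closing observation --- that the inner-product identity hinges on $\|Tx\|\le\|x\|$ and breaks down under mere power boundedness, forcing the ergodic-theorem opening of Theorem 4.1 --- matches exactly the point the paper makes in its introduction.
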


\begin{proof}
Take a contraction $T$ acting on a Hilbert space$.$ Consider the
Nagy--Foia\c s--Langer decomposition
$$
T=C\oplus U
$$
of $T$ (see e.g., \cite[Theorem 5.1]{MDOT})$.$ Here $C$ is a completely
nonunitary contraction, $U$ is unitary, and $\oplus$ stands for
orthogonal direct sum$.$ Since $C$ is a completely nonunitary contraction,
$$
\sigma_{\kern-1ptP}(C)\cup\sigma_{\kern-1ptR}(C)\sse\DD
$$
\cite[Corollary 7.4 and Proposition 8.4]{MDOT}$.$ Recall that the point
spectrum of an orthogonal direct sum of operators is the union of the
point spectra$.$ Thus
$$
\sigma_{\kern-1ptP}(C\oplus U)
=\sigma_{\kern-1ptP}(C)\cup\sigma_{\kern-1ptP}(U)
\quad\;[\,\hbox{and}\;\quad
\sigma_{\kern-1ptP}((C\oplus U)^*)
=\sigma_{\kern-1ptP}(C^*)\cup\sigma_{\kern-1ptP}(U^*)\,].
$$
Since $U$ is normal,
$\sigma_{\kern-1ptR}(U)=\sigma_{\kern-1ptR}(U^*)=\void$, and so
$$
\sigma_{\kern-1ptP}(U^*)^*=\sigma_{\kern-1ptP}(U).
$$
\vskip-4pt\noi
Therefore,
$$
\sigma_{\kern-1ptR}(T)
=[\sigma_{\kern-1ptP}(C^*)^*\cup\sigma_{\kern-1ptP}(U)]
\\[\sigma_{\kern-1ptP}(C)\cup\sigma_{\kern-1ptP}(U)]
\sse\sigma_{\kern-1ptR}(C)\sse\DD.
$$
\vskip-17pt\noi
\end{proof}

\vskip4pt
For another proof, still more elementary than the above one see, for
instance, \cite[Proposition 8.5]{MDOT}$.$ Similarity to a contraction
implies power boundedness and similarity preserves the spectrum and its
parts$.$ In particular, if $\widetilde T$ is similar to $T$, then
$\sigma_{\kern-1ptR}(\widetilde T)=\sigma_{\kern-1ptR}(T).$ Therefore
Proposition 3.1 says
$$
T\;\hbox{is similar to a contraction}
\quad\limply\quad
\sigma_{\kern-1ptR}(T)\sse\DD.
$$
Thus it has been asked in \cite[p.114]{MDOT} whether Proposition 3.1 (or
its equivalent form in the above displayed implication) can be extended to
power bounded operators$:$ {\it does power boundedness imply inclusion of
the residual spectrum in the open unit disk}$\,?$ In other words, if
${\sup_n\|T^n\|<\infty}$, is it true that
${\sigma_{\kern-1ptR}(T)\sse\DD}\/?$ The purpose of the note is to
answer this question.

\section{Answer}

Consider the above set-up$.$ Take an arbitrary operator ${T\in\BH}$.

\begin{theorem}
If\/ $T$ is power bounded, then\/ ${\sigma_{\kern-1ptR}(T)\sse\DD}$.
\end{theorem}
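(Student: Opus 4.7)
The plan is to show that $\sigma_{\kern-1ptR}(T)$ cannot meet the unit circle $\TT$; combined with the obvious inclusion $\sigma_{\kern-1ptR}(T)\sse\sigma(T)\sse\DD^-$ coming from the fact that a power bounded operator has spectral radius at most $1$, this yields $\sigma_{\kern-1ptR}(T)\sse\DD$. So the whole task reduces to the following implication: if $\lambda\in\TT$ and $\lambda\in\sigma_{\kern-1ptP}(T^*)^*$, then $\lambda\in\sigma_{\kern-1ptP}(T)$, which kills $\lambda$ as a candidate for $\sigma_{\kern-1ptR}(T)$ by its very definition.

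First I would translate the hypothesis into a range statement. Writing $\bar\lambda\in\sigma_{\kern-1ptP}(T^*)$ as $\N(\bar\lambda I-T^*)\ne\0$ and using $\N(A^*)=\R(A)^\perp$ with $A=\lambda I-T$, this is equivalent to $\overline{\R(\lambda I-T)}\ne\H$. The target, namely $\lambda\in\sigma_{\kern-1ptP}(T)$, is exactly $\N(\lambda I-T)\ne\0$. So the heart of the matter is to show
$$
\overline{\R(\lambda I-T)}\ne\H \;\limply\; \N(\lambda I-T)\ne\0
\qquad\hbox{for}\quad|\lambda|=1.
$$

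The key tool, as flagged in the introduction, is the Mean Ergodic Theorem. Set $S=\bar\lambda T$; since $|\bar\lambda|=1$, $S$ is again power bounded, and $I-S=\bar\lambda(\lambda I-T)$, so $\N(I-S)=\N(\lambda I-T)$ and $\R(I-S)=\R(\lambda I-T)$. The Mean Ergodic Theorem for a power bounded operator on a Hilbert space asserts that the Cesàro averages $\frac1n\sum_{k=0}^{n-1}S^k$ converge in the strong operator topology to a bounded projection $P$ satisfying $\R(P)=\N(I-S)$ and $\N(P)=\overline{\R(I-S)}$, yielding the topological direct sum decomposition
$$
\H=\N(I-S)\dotplus\overline{\R(I-S)}.
$$
Once this is in hand, the conclusion is immediate: if $\overline{\R(I-S)}\ne\H$, then necessarily $\N(I-S)\ne\0$, which is precisely $\N(\lambda I-T)\ne\0$.

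The only real obstacle is invoking (and, if the exposition demands it, briefly justifying) the Mean Ergodic Theorem in the power-bounded-on-Hilbert-space setting — that is, upgrading the usual contraction statement, whose proof in Proposition 3.1 does not immediately transfer. Apart from that, everything else is the routine adjoint identity $\N(A^*)=\R(A)^\perp$, the normalization $S=\bar\lambda T$ to put the eigenvalue at $1$, and the elementary observation that $\sigma_{\kern-1ptR}(T)\sse\DD^-$ for any power bounded $T$; stringing these together gives $\sigma_{\kern-1ptR}(T)\cap\TT=\void$ and therefore $\sigma_{\kern-1ptR}(T)\sse\DD$.
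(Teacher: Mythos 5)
Your proof is correct and rests on the same key ingredient as the paper's: the Mean Ergodic Theorem for power bounded operators, applied after rotating the unimodular point $\lambda$ to $1$. The difference is only one of packaging --- the paper applies the ergodic theorem to $\smallfrac{1}{\overline\lambda}T^*$ and manufactures from the ergodic limit $E^\cd$ an explicit eigenvector $y_0=E^{\cd*}x_0$ of $T$ to reach a contradiction, whereas you apply it to $S=\overline\lambda T$ and read off the conclusion from the standard identification of the ergodic limit as the projection onto $\N(I-S)$ along $\overline{\R(I-S)}$, combined with $\N((\lambda I-T)^*)=\R(\lambda I-T)^\perp$.
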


\begin{proof}
Consider the Ces\`aro means associated with the operator ${T\in\BH}$,
$$
C_n=\smallfrac{1}{n}\sum_{k=0}^{n-1}T^k\,\in\,\BH.
$$
Recall$:$ {\it If\/ $T$ is power bounded, then the sequence of Ces\`aro
means\/ $\{C_n\}$ converges strongly}$.$ [In other words, every power
bounded operator is (strongly) ergodic].

\vskip6pt\noi
This is the well-known Mean Ergodic Theorem for power bounded operators
(which holds on reflexive Banach spaces; see e.g.,
\cite[Corollary VIII.5.4]{LO1})$.$ The $\BH$-valued sequence $\{C_n\}$
converges strongly, which means the $\H$-valued sequence $\{C_nx\}$
converges in $\H$ for every ${x\in\H}.$ The Banach--Steinhaus Theorem
ensures the existence of an operator ${E\in\BH}$ for which ${C_nx\to Ex}$
for every ${x\in\H}.$ Notation:
$$
C_n \sconv E.
$$
Since each $C_n$ is a polynomial in $T$, $C_n$ and $T$ commute, and so
$E$ and $T$ commute$.$ Since $C_nT=C_n+\frac{1}{n}T^n-\frac{1}{n}I$ and
$T$ is power bounded, $ET=E.$ Therefore,
$$
ET=TE=E.
$$
If $T$ is power bounded, then ${r(T)\le1}$ where $r(T)$ stands for the
spectral radius of $T$ (see e.g., \cite[p.10]{MDOT}), which implies
${\sigma_{\kern-1ptR}(T)\sse\DD^-}\!.$ Suppose
$$
\sigma_{\kern-1ptR}(T)\cap\TT\ne\void.
$$
Take any ${\lambda\in\sigma_{\kern-1ptR}(T)\cap\TT}.$ Thus ${|\lambda|=1}$,
$$
T^*x_0=\overline\lambda x_0
\;\;\hbox{for some nonzero vector ${x_0\in\H}$}
\qquad ({\rm i.e.,}\;\lambda\in\sigma_{\kern-1ptP}(T^*)^*),
$$
$$
Tx\ne\lambda x
\;\;\hbox{for every nonzero vector ${x\in\H}$}
\qquad ({\rm i.e.,}\;\lambda\not\in\sigma_{\kern-1ptP}(T)).
$$
Set
$$
T^\cd=\smallfrac{1}{{\;\overline\lambda^{\phantom|}}}T^*\,\in\,\BH,
$$
which is power bounded since $T$ is and ${|\lambda|=1}.$ So there is
an ${E^\cd\!\in\BH}$ such that
$$
C^\cd_n=\smallfrac{1}{n}\sum_{k=0}^{n-1}T^{\cd k}\sconv E^\cd\!,
\qquad
E^\cd T^\cd=T^\cd E^\cd=E^\cd
\quad\;\hbox{and}\;\quad
E^\cd x_0=C^\cd_nx_0=x_0.
$$
(Indeed, since $T^{*k}x_0=\overline\lambda^k x_0$ we get
$C^\cd_nx_0=\frac{1}{n}\sum_{k=0}^{n-1}T^{\cd k}x_0=x_0.$) Moreover,
set
$$
y_0=E^{\cd*}x_0\,\in\,\H,
$$
which is nonzero since $x_0$ is$:$
$0\ne\|x_0\|=\<x_0\,;x_0\>=\<E^\cd x_0\,;x_0\>=\<x_0\,;y_0\>.$ Hence
$$
\<Ty_0\,;x\>
\kern-1pt=\kern-1pt\<TE^{\cd*}x_0\,;x\>
\kern-1pt=\kern-1pt \<x_0\,;E^\cd T^*x\>
\kern-1pt=\kern-1pt\<x_0\,;\overline\lambda E^\cd T^\cd x\>
\kern-1pt=\kern-1pt\<x_0\,;\overline\lambda E^\cd x\>
\kern-1pt=\kern-1pt\<\lambda y_0\,;x\>
$$
for every ${x\in\H}$, and so
$$
Ty_0=\lambda y_0.
$$
Since ${y_0\ne0}$ this contradicts the fact that ${Tx\ne\lambda x}$ for
every ${0\ne x\in\H}.$ Then the assumption
$\sigma_{\kern-1ptR}(T)\cap\TT\ne\void$ fails, and consequently
${\sigma_{\kern-1ptR}(T)\sse\DD}$.
\end{proof}

\section{Remarks}

Let $\sigma(T)$ be the spectrum of an operator $T$ in $\BH$ and consider
its continuous spectrum
$\sigma_{\kern-.5ptC}(T)
=\sigma(T)\\[{\sigma_{\kern-1ptP}(T)\cup\sigma_{\kern-1ptR}(T)}]$
so that
$\{\sigma_{\kern-1ptP}(T),\sigma_{\kern-1ptR}(T),\sigma_{\kern-.5ptC}(T)\}$
is a classical partition of $\sigma(T)$.

\vskip6pt
Recall$:$ (i) similarity preserves the spectrum and its parts (i.e., if
${W\!\in\BH}$ is invertible with an inverse $W^{-1}$ in $\BH$, then the
parts of the spectrum of $WTW^{-1}$ coincide with the respective parts of
the spectrum of $T$), and (ii) similar to a power bounded is again power
bounded$.$ Hence Theorem 4.1 says
$$
T\;\hbox{is similar to a power bounded}
\quad\limply\quad
\sigma_{\kern-1ptR}(T)\sse\DD.                              \leqno{\rm(a)}
$$
Thus everything that has been said above (and below) about power bounded
operators applies ``ipsi literis'' to operators similar to a power
bounded operator.

\vskip6pt
Theorem 4.1 also characterizes the peripheral spectrum (i.e.,
${\sigma(T)\cap\TT}$) of a power bounded operator:
$$
T\;\hbox{is power bounded}
\quad\limply\quad
\sigma(T)\cap\TT\sse\sigma_{\kern-1ptP}(T)\cup\sigma_C(T).  \leqno{\rm(b)}
$$
However, the boundary $\partial\sigma(T)$ of the spectrum $\sigma(T)$ of
a power bounded operator is not necessarily included in
$\sigma_{\kern-1ptP}(T)\cup\sigma_C(T).$ For instance take a unilateral
weighted shift $T={\rm shift}\{w_k\}$ in $\B[\ell_+^2]$ with weighting
sequence $\{w_k\}$ such that $w_k=\frac{1}{k}$ for each positive integer
$k.$ This is a quasinilpotent compact contraction (thus power bounded)
with $\|T\|=1$ for which
$$
\partial\sigma(T)=\sigma(T)=\sigma_{\kern-1ptR}(T)=\0.
$$

\vskip4pt
On the other hand Theorem 4.1 leads to a conjugate symmetry between
the intersection of the unit circle with the point spectra of $T$ and
$T^*\!$ (so that eigen\-values in the unit circle of a power bounded
operator are normal eigenvalues), namely,
$$
T\;\hbox{is power bounded}
\quad\limply\quad
\sigma_{\kern-1ptP}(T^*)^*\cap\TT
=\sigma_{\kern-1ptP}(T)\cap\TT.                             \leqno{\rm(c)}
$$
Indeed, set $A=\sigma_{\kern-1ptP}(T)$ and $B=\sigma_{\kern-1ptP}(T^*)$
so that $\sigma_{\kern-1ptR}(T)={B^*\\A}$ and
$\sigma_{\kern-1ptR}(T^*)={A^*\\B}.$ Suppose $T$ is power bounded$.$ Thus
$T^*$ is again power bounded$.$ By Theorem 4.1,
${(B^*\\A)\cap\TT}={(A^*\\B)\cap\TT}=\void.$ Equivalently,
${(B^*\\A)\cap\TT}=\void$ and ${(A^*\\B)^*\cap\TT}=\void$,
(i.e., ${(A\\B^*)\cap\TT}=\void).$ Therefore
${(B^*\cap\TT)\\(A\cap\TT)}=\void$ and ${(A\cap\TT)\\(B^*\cap\TT)}=\void$,
which means ${B^*\cap\TT}={A\cap\TT}$.

\section{Applications}

Take an arbitrary ${T\in\BH}.$ Let $\R(\lambda I-T)={(\lambda I-T)(\H)}$
denote the range of ${\lambda I-T}$ for an arbitrary ${\lambda\in\CC}$
and consider the set
$$
\M(T)
=\big\{u\in\H\!:\,\sup_n\big\|\hbox{$\sum$}_{k=0}^nT^ku\big\|<\infty\big\},
$$
which are linear manifolds of $\H$ invariant under $T.$ Take
${y\in\R(I-T)}$ arbitrary so that ${y=Tx-x}$ for some ${x\in\H}.$ Then
for each nonnegative integer $n$
$$
\hbox{$\sum$}_{k=0}^nT^ky=T^{n+1}x-x.                           \eqno(*)
$$

\begin{proposition}
$\R(I-T)\sse\M(T)$ if and only if\/ $T$ is power bounded.
\end{proposition}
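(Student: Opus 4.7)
The whole plan pivots on the telescoping identity $(*)$ already recorded in the excerpt: for $y=Tx-x\in\R(I-T)$ and every nonnegative integer $n$, $\sum_{k=0}^nT^ky=T^{n+1}x-x$. Both implications of the proposition should fall out of this single identity, with the Banach--Steinhaus Theorem supplying the decisive upgrade in one of the two directions.

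For the easy implication ``power bounded $\Rightarrow\R(I-T)\sse\M(T)$'' I would take an arbitrary $y\in\R(I-T)$, write it as $y=Tx-x$, and combine $(*)$ with the uniform estimate ${M:=\sup_n\|T^n\|<\infty}$. The triangle inequality then gives $\|\sum_{k=0}^nT^ky\|=\|T^{n+1}x-x\|\le(M+1)\|x\|$ uniformly in $n$, so $y\in\M(T)$.

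For the converse ``$\R(I-T)\sse\M(T)\Rightarrow T$ power bounded'' the key observation is that \emph{every} $x\in\H$ produces a vector $y:=Tx-x$ lying in $\R(I-T)$ and hence, by hypothesis, in $\M(T)$. Reading the defining condition of $\M(T)$ through $(*)$ yields $\sup_n\|T^{n+1}x-x\|<\infty$, and the triangle inequality upgrades this to $\sup_n\|T^{n+1}x\|<\infty$ for every ${x\in\H}$. This is pointwise boundedness of the family $\{T^n\}\sse\BH$; the Banach--Steinhaus Theorem then converts it to $\sup_n\|T^n\|<\infty$, which is precisely power boundedness.

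There is no serious obstacle here. The only point worth flagging is that in the converse direction one must notice that the hypothesis, which is quantified over $y\in\R(I-T)$, is effectively quantified over all $x\in\H$ via the map $x\mapsto Tx-x$; once that remark is in place, $(*)$ together with Banach--Steinhaus finishes the argument. Everything else is routine bookkeeping with the identity $(*)$.
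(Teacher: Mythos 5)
Your argument is correct and is essentially the paper's own proof: both directions rest on the telescoping identity $(*)$, with the Banach--Steinhaus Theorem upgrading the pointwise bounds $\sup_n\|T^{n+1}x-x\|<\infty$ to power boundedness in the converse direction. No gaps; the extra remarks (the explicit constant $(M+1)\|x\|$ and the quantification comment) are just routine elaborations of the same argument.
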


\begin{proof}
Take any ${x\in\H}$ and set ${y=Tx-x}$ so that ${y\in\R(I-T)}.$ If
$\R({I-T})\sse\M(T)$, then $\sup_n\|{T^{n+1}x-x}\|<\infty$ according to
$(*)$, and hence ${\sup_n\|T^nx\|<\infty}.$ Since this holds for every
${x\in\H}$, $T$ is power bounded by the Banach--Steinhaus Theorem$.$
Conversely, if $T$ is power bounded, then $\R({I-T})\sse\M(T)$ by $(*)$.
\end{proof}

\begin{corollary}
If\/ $\R(I-T)\sse\M(T)$, then\/ $\sigma_{\kern-1ptR}(T)\sse\DD$.
\end{corollary}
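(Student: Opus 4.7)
The plan is to observe that this corollary is an immediate consequence of the two preceding results, so the ``proof'' is really just a one-step chain: Proposition 5.1 converts the range-inclusion hypothesis into power boundedness, and Theorem 4.1 then converts power boundedness into the desired spectral inclusion.

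More explicitly, I would argue as follows. Assume $\R(I-T)\sse\M(T)$. By Proposition 5.1, this condition is equivalent to $T$ being power bounded. Since $T$ is power bounded, Theorem 4.1 applies directly and yields $\sigma_{\kern-1ptR}(T)\sse\DD$. That is the entire argument; there is no separate computation to perform.

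There is essentially no obstacle here: both implications are already established earlier in the paper, and we are only using the ``only if'' direction of Proposition 5.1. The only thing worth double-checking is that Proposition 5.1's ``if'' direction (as stated and proved in the excerpt, using the identity $(*)$) really does give the implication $\R(I-T)\sse\M(T)\limply T$ power bounded, which it does via the Banach--Steinhaus Theorem applied to $\{T^nx\}$. After that, Theorem 4.1 closes the argument.

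Because the corollary is a transparent consequence of Proposition 5.1 and Theorem 4.1, a written proof of more than one or two sentences would be gratuitous; the role of the corollary is mainly to package the spectral consequence in terms of the invariant manifold $\M(T)$ rather than in terms of the power-boundedness hypothesis itself.
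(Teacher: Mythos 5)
Your proof is correct and coincides with the paper's own argument, which simply cites the preceding proposition (Proposition 6.1 in the paper's numbering, the equivalence of $\R(I-T)\sse\M(T)$ with power boundedness) together with Theorem 4.1. No gap; only the proposition's label differs from the paper's.
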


\begin{proof}
This follows by Proposition 6.1 and Theorem 4.1.
\end{proof}

By Proposition 6.1 $T$ is power unbounded if and only if
${\R(I-T)\not\sse\M(T)}.$
\vskip6pt\noi
{\narrower \narrower
{\it In particular, if\/ a power unbounded operator is such that
$\sup_n\|T^nx\|=\infty$ for every nonzero\/ ${x\in\H}$, then}\/
${\M(T)\cap\R(I-T)}=\0$\/.
\vskip6pt}\noi
Indeed, if ${\M(T)\cap\R(I-T)}\ne\0$ and ${0\ne y\in\M(T)\cap\R(I-T)}$,
then by $(*)$ there exists ${0\ne x\in\H}$ such that
${\|T^{n+1}x-x\|}=\|\sum_{k=0}^nT^ky\|$ for each nonnegative integer $n$,
and hence ${\sup_n\|T^nx\|<\infty}$ because ${y\in\M(T)}$.
\vskip6pt\noi
{\narrower\narrower
{\it More particularly, there are power unbounded operators with\/
$\sup_n\|T^nx\|$ $=\infty$ for every nonzero\/ ${x\in\H}$ for which}\/
${\0=\M(T)\subset\R(I-T)=\H}$.
\vskip6pt}\noi
(Set ${T=2I}$.) On the other hand,
\vskip6pt\noi
{\narrower\narrower
{\it there exist power bounded operators for which}\/ ${\R(I-T)=\M(T)}$.
\vskip6pt}\noi
(Trivial examples$:$ ${T=O,I,\frac{1}{2}I}$ --- less trivial$:$
$T={(I\oplus\frac{1}{2}I)},{(O\oplus\frac{1}{2}I})$.)

\vskip6pt
The noninclusion $\M(T)\not\sse\R(I-T)$ can be characterized as follows.

\begin{proposition}
${\M(T)\not\sse\R(I-T)}$ if and only if there exists a nonzero operator\/
${L\in\BH}$ for which\/ ${\R(L)\cap\R(I-T)}=\0$ and\/ ${\R(L)\sse\M(T)}$.
\end{proposition}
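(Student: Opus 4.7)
The plan is to treat the two implications separately. The reverse direction reduces to evaluating $L$ at a single vector, while the forward direction calls for an explicit construction of $L$; a rank-one operator will do the job.

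For the reverse implication, I would suppose such a nonzero ${L\in\BH}$ is given. Since ${L\ne O}$, there exists some ${x\in\H}$ with ${Lx\ne0}$, and then ${Lx\in\R(L)\sse\M(T)}.$ The hypothesis ${\R(L)\cap\R(I-T)=\0}$, together with ${Lx\ne0}$, forces ${Lx\notin\R(I-T)}.$ Thus $Lx$ witnesses ${\M(T)\not\sse\R(I-T)}$.

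For the forward implication, I would assume ${\M(T)\not\sse\R(I-T)}$ and pick any ${u_0\in\M(T)\\\R(I-T)}$, which is automatically nonzero because ${0\in\R(I-T)}.$ I then define the rank-one operator ${L\in\BH}$ by
$$
Lx=\<x\,;u_0\>\,u_0 \qquad (x\in\H).
$$
Then ${Lu_0=\|u_0\|^2u_0\ne0}$, so ${L\ne O}$, and ${\R(L)=\span\{u_0\}}$. Since the excerpt notes that $\M(T)$ is a linear manifold, ${u_0\in\M(T)}$ yields ${\R(L)=\span\{u_0\}\sse\M(T)}$. Finally, $\R(I-T)$ is also a linear manifold, so if some nonzero scalar multiple $\alpha u_0$ lay in $\R(I-T)$, then ${u_0=\alpha^{-1}(\alpha u_0)\in\R(I-T)}$, contradicting the choice of $u_0$; hence ${\R(L)\cap\R(I-T)=\0}$.

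I do not foresee any real obstacle. The only conceptual ingredient is the observation that both $\M(T)$ and $\R(I-T)$ are linear manifolds, so the failure of the inclusion at a single nonzero vector can be upgraded to an operator-level failure by taking a rank-one operator whose range is the line through that vector.
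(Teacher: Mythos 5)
Your argument is correct and follows essentially the same route as the paper: the paper also takes a nonzero witness $y\in\M(T)\setminus\R(I-T)$ and uses the orthogonal projection onto $\span\{y\}$ (your rank-one operator is just a positive scalar multiple of that projection), invoking linearity of $\M(T)$ and $\R(I-T)$ exactly as you do. The reverse direction in the paper is stated contrapositively but amounts to your direct evaluation of $L$ at a vector where it is nonzero.
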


\begin{proof}
If ${T\in\BH}$ is such that ${\M(T)\not\sse\R(I-T)}$, then take an
arbitrary nonzero ${y\in\M(T)\\\R(I-T)}$, and consider the orthonormal
projection ${E\in\BH}$ for which $\R(E)=\span\{y\}$ so that
${\R(E)\cap\R(I-T)}=\0$ and ${\R(E)\sse\M(T)}$ (since $\R(E)$, $\R({I-T})$,
and $\M(T)$ are linear manifolds of $\H).$ Conversely, if $L$ and $T$ in
$\BH$ are such that ${\0\ne\R(L)\sse\M(T)\sse\R(I-T)}$, then
${\R(L)\cap\R(I-T)}\ne\0$.
\end{proof}

And the inclusion $\R(L)\sse\M(T)$ is characterized as follows.

\begin{proposition}
$\R(L)\sse\M(T)$ if and only if\/
$\sup_n\big\|\sum_{k=0}^nT^kL\big\|<\infty$.
\end{proposition}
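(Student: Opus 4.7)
The plan is to view the statement as a standard instance of the principle of uniform boundedness (Banach--Steinhaus), with one direction being immediate.

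For the easy direction, assume $\sup_n\|\sum_{k=0}^n T^k L\|<\infty$. For every ${x\in\H}$ and every ${u=Lx\in\R(L)}$, the triangle-type estimate
$$
\Big\|\hbox{$\sum$}_{k=0}^n T^k u\Big\|
=\Big\|\Big(\hbox{$\sum$}_{k=0}^n T^k L\Big)x\Big\|
\le\Big\|\hbox{$\sum$}_{k=0}^n T^k L\Big\|\,\|x\|
$$
shows that $\sup_n\|\sum_{k=0}^n T^k u\|<\infty$, so ${u\in\M(T)}$, and hence ${\R(L)\sse\M(T)}$.

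For the converse, assume ${\R(L)\sse\M(T)}$. Consider the $\BH$-valued sequence of partial sums ${S_n=\sum_{k=0}^n T^k L}$. For each fixed ${x\in\H}$, the vector ${Lx\in\R(L)}$ lies in $\M(T)$ by hypothesis, which by the definition of $\M(T)$ means exactly that
$$
\sup_n\|S_nx\|
=\sup_n\Big\|\hbox{$\sum$}_{k=0}^n T^k Lx\Big\|<\infty.
$$
Thus the family $\{S_n\}\sse\BH$ is pointwise bounded. The Banach--Steinhaus Theorem then yields ${\sup_n\|S_n\|<\infty}$, which is the desired conclusion.

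Neither direction presents a real obstacle; the nontrivial ingredient is simply the uniform boundedness principle, mirroring the way it was invoked earlier in the paper (e.g., in the definition of power boundedness and in the proof of Proposition~6.1). The only point to keep straight is that $\M(T)$ is defined pointwise, so the bound is a priori different for each ${Lx\in\R(L)}$, and it is precisely Banach--Steinhaus that upgrades this to a uniform bound on $\|\sum_{k=0}^n T^k L\|$.
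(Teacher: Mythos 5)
Your proof is correct and follows exactly the paper's route: the easy direction is the operator-norm estimate, and the converse is the Banach--Steinhaus Theorem applied to the pointwise-bounded family $\sum_{k=0}^n T^kL$, with the observation that pointwise boundedness on vectors $Lx$ is precisely the statement $\R(L)\sse\M(T)$. No differences of substance.
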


\begin{proof}
Take arbitrary operators $L$ and $T$ in $\BH.$ By the Banach--Steinhaus
Theorem, $\sup_n\big\|\sum_{k=0}^nT^kL\big\|<\infty$ if and only if
$\sup_n\big\|\sum_{k=0}^nT^kLx\big\|<\infty$ for every ${x\in\H}.$
Equivalently, $\sup_n\big\|\sum_{k=0}^nT^ky\big\|<\infty$ for every
${y\in\R(L)}$, which means ${\R(L)\sse\M(T)}$.
\end{proof}

As it is well known, the spectral radius
$r(T)=\max_{\lambda\in\sigma(T)}|\lambda|$ of an operator $T$ on a Banach
space is less than 1 if and only if the operator $T$ is uniformly stable
(notation$:$ ${T^n \uconv O}$), which completely characterizes uniform
stability; that is,
$$
\|T^n\|\to0
\quad\hbox{if and only if}\quad
r(T)<1.
$$
Clearly, uniform stability implies power boundedness, which implies
${r(T)\le1}$$.$ The above equivalence has been extended in \cite{Lek} where
it was given a complete characterization (in terms of an ergodic condition
and the peripheral spectrum) for uniform convergence to zero of $\{T^nL\}$
for operators $L$ in the commutant of a power bounded operator $T.$ We
state the result from \cite{Lek} below$.$ Take any operator ${T\in\BH}$ and
let $\{T\}'$ denote its commutant (the subalgebra of $\BH$ consisting of
all operators ${L\in\BH}$ that commute with $T$).

\begin{proposition}
\cite{Lek} If\/ ${T\in\BH}$ is a power bounded operator and\/ $L$ lies
in\/ $\{T\}'$, then\/ ${\|T^nL\|\to0}$ if and only if\/
$\frac{1}{n+1}
\big\|\sum_{k=0}^n\big(\frac{T}{\lambda}\big)^k L\big\|\to0$
for every\/ ${\lambda\in\sigma(T)\cap\TT}$.
\end{proposition}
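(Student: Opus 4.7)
The plan is to handle the two implications separately. The forward implication ``${\|T^nL\|\to 0}\Rightarrow$ Ces\`aro condition'' is a pure Ces\`aro-averaging fact. Since ${|\lambda|=1}$ gives ${\|(T/\lambda)^kL\|=\|T^kL\|}$, one has
$$
\smallfrac{1}{n+1}\Big\|\hbox{$\sum$}_{k=0}^n(T/\lambda)^kL\Big\|
\le\smallfrac{1}{n+1}\hbox{$\sum$}_{k=0}^n\|T^kL\|\to 0,
$$
because the Ces\`aro mean of a null sequence is null$.$ No commutativity or spectral hypothesis is needed here, and the conclusion actually holds for every ${\lambda\in\TT}$, not only at peripheral spectrum points.

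For the converse, the natural move is to pass to the $T$-invariant subspace ${M:=\overline{\R(L)}}$ (invariance from ${TL=LT}$) and aim at ${r(T|_M)<1}.$ Once that is achieved, Gelfand's formula gives ${\|T^nL\|\le\|(T|_M)^n\|\,\|L\|\to 0}$, since ${\R(T^nL)\sse M}.$ Power boundedness already places ${\sigma(T|_M)\sse\DD^-}$, so the whole task reduces to showing ${\sigma(T|_M)\cap\TT=\void}$, and only each ${\lambda\in\sigma(T)\cap\TT}$ must be ruled out.

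Fix such a $\lambda$ and put ${S=T/\lambda}$, power bounded with ${1\in\sigma(S)}.$ The Mean Ergodic Theorem used in the proof of Theorem 4.1 produces the ergodic projection $E$ onto $\N(I-S)$ along $\overline{\R(I-S)}$ as the strong limit of the Ces\`aro means of $S.$ Passing to the strong limit in the hypothesis (operator-norm convergence to $0$ of the Ces\`aro means of $S^kL$) yields ${EL=0}$, i.e., ${\R(L)\sse\overline{\R(I-S)}}.$ More importantly, the operator-norm rate in the hypothesis, combined with power boundedness and a uniform ergodic theorem of Dunford--Lin type, should force $1$ to be a regular point of the resolvent of $S|_M$, equivalently ${\lambda\not\in\sigma(T|_M)}.$ Running this at every peripheral $\lambda$ would then clear $\TT$ from $\sigma(T|_M)$ and complete the proof.

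The main obstacle lies precisely in this last step$.$ The peripheral spectrum ${\sigma(T)\cap\TT}$ may be uncountable or even all of $\TT$, so a naive ``peel off one eigenvalue at a time'' strategy via Riesz projections (which need isolated points) is unavailable$.$ The correct substitute, presumably the route taken in \cite{Lek}, is a Katznelson--Tzafriri type Tauberian argument that converts the pointwise Ces\`aro decay at each peripheral $\lambda$ into uniform control of $(zI-T|_M)^{-1}$ as $z$ approaches $\TT$ from outside $\DD^-\!$, and from there extracts the norm-convergence ${\|T^nL\|\to 0}$.
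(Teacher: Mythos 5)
The paper itself offers no proof of this proposition: it is stated as a quotation from \cite{Lek}, so your argument has to stand entirely on its own, and it does not. The forward implication is correct (and essentially trivial). The converse, which is the whole content of L\'eka's theorem, is where the gap lies: the decisive step is your assertion that the hypothesis ``should force'' $\lambda\notin\sigma(T|_M)$ for $M=\overline{\R(L)}$ via ``a uniform ergodic theorem of Dunford--Lin type'', followed by an appeal to ``presumably the route taken in \cite{Lek}'' for the missing Tauberian argument. Deferring the key step to the very reference being proved leaves the converse unestablished; moreover, Lin's uniform ergodic theorem concerns norm convergence of the Ces\`aro means of $S|_M$ itself, which your hypothesis (Ces\`aro means multiplied on the right by $L$) does not supply.

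Worse, the reduction you aim for --- show $\sigma(T|_M)\cap\TT=\void$, hence $r(T|_M)<1$, hence $\|T^nL\|\to0$ by Gelfand's formula --- is not merely unproved but false in general, so it cannot be repaired. On $\ell_+^2$ take $T=\diag\{1-\smallfrac{1}{k}\}$ and $L=\diag\{\smallfrac{1}{k}\}$. Then $T$ is a contraction, $L\in\{T\}'$ is injective with dense range, and
$$
\|T^nL\|=\sup_k\big(1-\smallfrac{1}{k}\big)^n\smallfrac{1}{k}
\le\sup_{0<t\le1}t\,e^{-nt}=\smallfrac{1}{en}\to0,
$$
so by your own forward direction the Ces\`aro condition holds at every $\lambda\in\TT$; yet $M=\overline{\R(L)}=\ell_+^2$ and $1\in\sigma(T)=\sigma(T|_M)$, so $r(T|_M)=1$ and the peripheral spectrum of the restriction is nonempty. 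Uniform stability of $T|_M$ is strictly stronger than $\|T^nL\|\to0$, so the correct converse must work directly with the pair $(T,L)$ --- as L\'eka does --- rather than through the spectral radius of a restriction. (Your observation that $EL=0$, i.e.\ $\R(L)\sse\overline{\R(I-S)}$, is correct but falls far short of the conclusion.)
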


The above statement is vacuous for the particular case of $L$ being an
isometry in $\{T\}'$ (e.g., for ${L=I}).$ Indeed, if $L$ is an isometry
then $\|T^nL\|=\|T^n\|$, and ${r(T)<1}$ makes the peripheral spectrum
${\sigma(T)\cap\TT}$ empty$.$ In fact, ${r(T)<1}$ if and only if
${\|T^n\|\to0}$, which implies ${\|T^nL\|\to0}$ for all ${L\in\BH}.$ Thus
we assume the power bounded operator $T$ is not uniformly stable;
equivalently, the power bounded operator $T$ is such that $r(T)=1.$ Also,
if $T$ is an isometry and ${\|T^nL\|\to0}$, then $L=O.$ Actually, if $L$
is not injective (i.e., if $\N(L)\ne\0$), then there exists ${0\ne x_0}$
in $\H$ for which
$\|T^nLx_0\|=\big\|\sum_{k=0}^n(\frac{T}{\lambda})^kLx_0\big\|=0$ for
every ${n\ge0}$ and every ${\lambda\in\CC}$.

\begin{corollary}
Let\/ ${T\in\BH}$ be a power bounded operator with\/ ${r(T)=1}$ and let\/
$L$ be an injective operator in\/ $\{T\}'.$ Then
$$
\|T^nL\|\to0
\,\;\hbox{if and only if}\;\;
\smallfrac{1}{n+1}
\big\|{\sum}_{k=0}^n\big(\smallfrac{T}{\lambda}\big)^kL\big\|\to0
\,\;\hbox{for every}\;\;
\lambda\in\sigma_{\kern-1pt C}(T)\cap\TT,
$$
and\/ $\sigma(T)\cap\TT=\sigma_{\kern-1pt C}(T)\cap\TT\ne\void$ whenever
any of the above limits hold.
\end{corollary}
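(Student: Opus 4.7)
The plan is to deduce Corollary 6.6 from Proposition 6.5 and Theorem 4.1, aided by one extra observation that exploits the injectivity of $L$ and the commutation ${LT=TL}.$

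First I would collect two standing facts about the peripheral spectrum$.$ Since ${r(T)=1}$ and $\sigma(T)$ is a nonempty compact subset of $\DD^-$, there must exist a spectral point of modulus one, so $\sigma(T)\cap\TT\ne\void.$ Next, $T$ being power bounded lets me invoke Theorem 4.1 to get $\sigma_{\kern-1ptR}(T)\cap\TT=\void$, and hence the partition
$$
\sigma(T)\cap\TT
=(\sigma_{\kern-1ptP}(T)\cap\TT)\cup(\sigma_{\kern-1pt C}(T)\cap\TT).
$$

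The crucial step, which is what allows $\sigma_{\kern-1pt C}(T)\cap\TT$ to replace $\sigma(T)\cap\TT$ in Proposition 6.5, is to rule out eigenvalues on the unit circle$.$ For any ${\lambda\in\sigma_{\kern-1ptP}(T)\cap\TT}$, pick an eigenvector ${x_0\ne0}$ with ${Tx_0=\lambda x_0}$; injectivity of $L$ gives ${Lx_0\ne0}$ and commutation with $T$ yields ${(T/\lambda)^kLx_0=L(T/\lambda)^kx_0=Lx_0}$ for every ${k\ge0}$, so
$$
\smallfrac{1}{n+1}\big\|\hbox{$\sum$}_{k=0}^n(T/\lambda)^kL\big\|
\ge\smallfrac{\|Lx_0\|}{\|x_0\|}>0,
$$
independent of $n$. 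In particular, the Ces\`aro-type limit in Proposition 6.5 cannot vanish at any such $\lambda$, and the same eigenvector $x_0$ satisfies ${\|T^nLx_0\|=\|Lx_0\|\ne0}$, forcing ${\|T^nL\|\not\to0}$ whenever ${\sigma_{\kern-1ptP}(T)\cap\TT\ne\void}$.

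With these in hand, the forward direction of Corollary 6.6 is immediate$:$ if ${\|T^nL\|\to0}$ then Proposition 6.5 yields the ergodic limit at every ${\lambda\in\sigma(T)\cap\TT}$, the crucial step forbids any ${\lambda\in\sigma_{\kern-1ptP}(T)\cap\TT}$, whence $\sigma_{\kern-1ptP}(T)\cap\TT=\void$ and ${\sigma(T)\cap\TT=\sigma_{\kern-1pt C}(T)\cap\TT\ne\void}.$ The converse reads off Proposition 6.5 once the ``whenever'' clause supplies ${\sigma(T)\cap\TT=\sigma_{\kern-1pt C}(T)\cap\TT}$, so that the ergodic hypothesis on $\sigma_{\kern-1pt C}(T)\cap\TT$ coincides with the hypothesis of Proposition 6.5 and returns ${\|T^nL\|\to0}.$ The main obstacle I anticipate is precisely this parsing of the biconditional$:$ without the peripheral spectrum equality the converse would fail (for instance at ${T=L=I}$ the right-hand side is vacuous while ${\|T^nL\|\not\to0}$), so that equality is an implicit part of the stated condition and must itself be produced from Theorem 4.1 together with the crucial step.
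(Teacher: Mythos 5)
Your proposal is correct and follows the paper's own proof essentially verbatim: nonemptiness of $\sigma(T)\cap\TT$ from $r(T)=1$, removal of the residual part of the peripheral spectrum via Theorem 4.1, removal of peripheral eigenvalues via the eigenvector--injectivity computation in the commutant (under either limit), and then the appeal to L\'eka's theorem (the paper's Proposition 6.4, which you label 6.5). Your closing remark on how the biconditional must be parsed (the $T=L=I$ example) flags a looseness that the paper's own proof also glosses over, but it does not alter the route, which is the same as the paper's.
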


\begin{proof}
If ${r(T)\kern-.5pt=\kern-.5pt1}$, then
${\sigma(T)\cap\TT\kern-1pt\ne\kern-1pt\void}.$ If $T$ is a power bounded,
then ${\sigma_{\kern-1pt R}(T)\cap\TT\kern-.5pt=\kern-.5pt\void}$
according to Theorem 4.1$.$ If ${L\in\{T\}'}$ is injective and the claimed
equivalence holds, then ${\sigma_{\kern-1pt P}(T)\cap\TT=\void}.$ Indeed,
if ${L\in\{T\}'}$ and ${\sigma_{\kern-1pt P}(T)\cap\TT\ne\void}$, then
there is an ${\lambda\in\TT}$ such that ${Tx_0=\lambda x_0}$ for some
${0\ne x_0\in\H}$ (thus $x_0={T^kx_0/\lambda^k}$). Hence
$\|Lx_0\|={\|T^nLx_0\|\to0}$ (also ${\|Lx_0\|}
=\frac{1}{n+1}\big\|\sum_{k=0}^nLx_0\big\|
=\frac{1}{n+1}\big\|\sum_{k=0}^n\big(\frac{T}{\lambda}\big)^kLx_0\big\|
\to0)$
and so ${0\ne x_0\in\N(L)}$, which implies $L$ is not injective$.$ Thus
$$
\sigma(T)\cap\TT=\sigma_{\kern-1pt C}(T)\cap\TT\ne\void,
$$
and the claimed result follows from Proposition 6.4.
\end{proof}

An important particular case of Proposition 6.4 is that of ${L=I-T}$ so
that $T^nL=T^n-T^{n+1}$ for every integer ${n\ge0}.$ This leads to a
classical result from \cite{KT} (which holds for contractions acting on
Banach spaces).

\begin{proposition}
\cite{KT} Let\/ ${T\in\BH}$ be a contraction$.$ Then\/
${\|T^n-T^{n+1}\|\to0}$ if and only if\/ ${\sigma(T)\cap\TT}\sse\{1\}$.
\end{proposition}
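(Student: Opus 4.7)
The plan is to obtain this statement directly from Proposition 6.4 specialised to ${L=I-T}$, which lies in the commutant ${\{T\}'}$; every contraction is power bounded, so the hypothesis of Proposition 6.4 is in place.

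For the implication ${\sigma(T)\cap\TT\sse\{1\}\limply\|T^n-T^{n+1}\|\to0}$, I would invoke Proposition 6.4 with ${L=I-T}$ and check its Ces\`aro criterion for each ${\lambda\in\sigma(T)\cap\TT}.$ Either this set is empty, in which case the criterion is vacuous, or the only point to examine is ${\lambda=1}$, at which the sum telescopes,
$$
\sum_{k=0}^nT^k(I-T)=I-T^{n+1}.
$$
Since $T$ is a contraction, ${\|I-T^{n+1}\|\le2}$, so the normalised quantity ${\frac{1}{n+1}\|I-T^{n+1}\|}$ tends to zero$.$ Proposition 6.4 then yields ${\|T^n(I-T)\|\to0}$, i.e., ${\|T^n-T^{n+1}\|\to0}$.

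For the converse I would appeal to the Spectral Mapping Theorem for the polynomial ${p(z)=z^n-z^{n+1}=z^n(1-z)}$. For any ${\lambda\in\sigma(T)\cap\TT}$, the value ${\lambda^n(1-\lambda)}$ lies in ${\sigma(T^n-T^{n+1})}$, whence
$$
|1-\lambda|=|\lambda^n(1-\lambda)|\le r(T^n-T^{n+1})\le\|T^n-T^{n+1}\|\to0,
$$
which forces ${\lambda=1}$; hence ${\sigma(T)\cap\TT\sse\{1\}}$.

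No genuine obstacle arises$:$ the decisive coincidence is that the polynomial factor $(1-\lambda)$ vanishes precisely at the only admissible peripheral spectral point ${\lambda=1}$, so the Ces\`aro criterion of Proposition 6.4 is met for free from the elementary bound ${\|I-T^{n+1}\|\le2}.$ The contraction hypothesis is used only through this bound, so the same argument upgrades verbatim to any power bounded operator (with $2$ replaced by ${1+\sup_n\|T^n\|}$), which clarifies why Proposition 6.4 subsumes the original Katznelson--Tzafriri theorem.
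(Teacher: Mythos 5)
Your proposal is correct and follows exactly the route the paper indicates: the paper gives no proof of Proposition 6.6 (it cites \cite{KT}), but it introduces it precisely as the particular case ${L=I-T}$ of Proposition 6.4, and your argument simply carries that specialization out --- the telescoping identity $\sum_{k=0}^{n}T^{k}(I-T)=I-T^{n+1}$ with the bound $\|I-T^{n+1}\|\le 2$ handles the Ces\`aro criterion at $\lambda=1$, while the converse is the standard spectral mapping estimate $|1-\lambda|=|\lambda^{n}(1-\lambda)|\le\|T^{n}-T^{n+1}\|$. Your closing remark is also sound: the same computation works for any power bounded $T$ (with $2$ replaced by $1+\sup_{n}\|T^{n}\|$), which is exactly the sense in which Proposition 6.4 subsumes the Hilbert-space Katznelson--Tzafriri theorem.
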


If the contraction $T$ is uniformly stable (i.e., if ${r(T)<1}$), then
${\sigma(T)\cap\TT=\void}.$ Otherwise it is a normaloid contraction of
unit norm, which means $r(T)=\|T\|=1$.

\begin{corollary}
If ${r(T)=1}$, then the above unique point ${\lambda=1}$ in the peripheral
spectrum ${\sigma(T)\cap\TT}$ is either an eigenvalue {\rm(i.e.,} lies in
$\sigma_{\kern-1pt P}(T))$, or lies in $\sigma_{\kern-1pt C}(T)$.
\end{corollary}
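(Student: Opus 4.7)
The proof is essentially immediate from Theorem 4.1 combined with the partition structure of the spectrum, so my plan is to assemble those ingredients in the right order rather than to do any real work.

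First, I would record the standing context inherited from the paragraph preceding the corollary: $T$ is a contraction with $\|T^n-T^{n+1}\|\to 0$, Proposition 6.6 gives $\sigma(T)\cap\TT\sse\{1\}$, and the hypothesis $r(T)=1$ forces $\sigma(T)\cap\TT$ to be nonempty (the spectral radius is attained in $\sigma(T)$), so in fact $\sigma(T)\cap\TT=\{1\}$. In particular, $1\in\sigma(T)$.

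Next, I would invoke the main theorem of the paper. Since every contraction is power bounded, Theorem 4.1 applies to $T$ and yields $\sigma_{\kern-1ptR}(T)\sse\DD$. Because $\DD\cap\TT=\void$, this gives the crucial fact
$$
1\notin\sigma_{\kern-1ptR}(T).
$$

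Finally, I would appeal to the fact, recalled at the opening of Section 5, that $\{\sigma_{\kern-1ptP}(T),\sigma_{\kern-1ptR}(T),\sigma_{\kern-.5ptC}(T)\}$ is a partition of $\sigma(T)$. Combining $1\in\sigma(T)$ with $1\notin\sigma_{\kern-1ptR}(T)$ then forces $1\in\sigma_{\kern-1ptP}(T)\cup\sigma_{\kern-.5ptC}(T)$, which is the desired dichotomy.

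There is no real obstacle here; the only mildly subtle point is the justification that $\sigma(T)\cap\TT=\{1\}$ rather than $\void$ under the hypothesis $r(T)=1$, and that is already supplied in the paragraph between Proposition 6.6 and the corollary (the normaloid contraction observation). The entire content of the corollary is therefore the statement that Theorem 4.1 excludes $1$ from $\sigma_{\kern-1ptR}(T)$ for this particular contraction.
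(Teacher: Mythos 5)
Your argument is correct and is essentially the paper's own: the paper likewise notes that $1\in\sigma(T)$ (from the preceding paragraph), splits off the case $\N(I-T)\ne\0$ as $1\in\sigma_{\kern-1pt P}(T)$, and otherwise places $1$ in $\sigma_{\kern-1pt C}(T)$ because $\sigma_{\kern-1pt R}(T)\cap\TT=\void$ by Proposition 3.1 (or Theorem 4.1), which is exactly your exclusion-plus-partition step. The only difference is cosmetic phrasing (explicit case split on injectivity of $I-T$ versus quoting the partition of $\sigma(T)$), plus a harmless mislabel of the Katznelson--Tzafriri proposition's number.
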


\begin{proof}
If ${\N(I-T)\ne\0}$ (i.e., if ${I-T}$ is not injective), then
${1\in\sigma_{\kern-1pt P}(T)}.$ Otherwise ${1\in\sigma_{\kern-1pt C}(T)}$
because ${\sigma_{\kern-1pt R}(T)\cap\TT=\void}$ by Proposition 3.1 since
$T$ is a contraction (or by Theorem 4.1 since contractions are power
bounded).
\end{proof}

\vskip-5pt\noi
\section*{Acknowledgment}
We thank an anonymous referee who made sensible observations throughout
the text contributing to the improvement of the paper$.$ We also thank an
anonymous person who posted the idea behind the argument in
the proof of Theorem 4.1 at https://mathoverflow.net/questions/204438/

\vskip-10pt\noi
\bibliographystyle{amsplain}

\end{document}